\newcommand{\mf}{\mathfrak}
\newcommand{\g}{\mf{g}}
\newcommand{\h}{\mf{h}}
\numberwithin{equation}{section}
\newtheorem{theorem}{Theorem}[section]
\newtheorem{proposition}[theorem]{Proposition}
\newtheorem{lemma}[theorem]{Lemma}
\newtheorem{corollary}[theorem]{Corollary}
\theoremstyle{remark}
\newcommand{\be}{\beta} 
\renewcommand{\a}{\alpha} 
\newcommand{\s}{\sigma}
\newcommand{\C}{\mathbb{C}}
\newcommand{\D}{\Delta}
\newcommand{\Dp}{\Delta^+}
\begin{document}
\title{On special covariants in the exterior algebra of a simple Lie algebra}

 \author{Corrado De Concini,
Pierluigi M\"oseneder Frajria, Paolo Papi}\author{Claudio Procesi}
\address[De Concini, Papi (corresponding author), Procesi]{Dipartimento di Matematica, Sapienza Universit\`a di Roma, P.le A. Moro 2,
00185, Roma, Italy; }
\address[M\"oseneder Frajria]{Politecnico di Milano, Polo regionale di Como, 
Via Valleggio 11, 22100 Como,
Italy.}
\email{deconcin@mat.uniroma1.it, pierluigi.moseneder@polimi.it} 
\email{papi@mat.uniroma1.it, procesi@mat.uniroma1.it}
  \keywords{Exterior algebra, invariants,  little adjoint representation, small representation.}
  \subjclass[2010]{17B20}

 \begin{abstract}  We study the subspace of the exterior algebra of a simple complex Lie algebra  linearly spanned by the copies of  the little adjoint representation or, in the case of the Lie algebra of traceless matrices, by the copies of the n-th symmetric power of the defining representation. As main result  we prove that this subspace is a  free module over the subalgebra of the exterior algebra generated by all primitive invariants except the one of highest degree.
 \end{abstract}
  \maketitle
 \section{Introduction}
  Let $\g$ be a simple Lie algebra (over $\C$) of rank $r$. In \cite{DPP} , the isotypic component $A=Hom_\g(\g,\bigwedge \g)$ of the adjoint representation in the exterior algebra of $\g$ has been studied. Recall that the invariant algebra $(\bigwedge \g)^\g$ is  
an exterior
algebra $\bigwedge(P_1,\dots,P_r)$ over primitive generators $P_i$ of degree $2m_i + 1$, where the integers $m_i$ (with $m_1\le\dots\le m_r$) are the exponents of $\g$. 
The main result of \cite{DPP} states that $A$ is a free algebra of rank $2r$ over the algebra $\bigwedge(P_1,\dots,P_{r-1})$. The purpose of this short paper is to single out other 
instances of this special behavior. We prove that the space $Hom_\g(L,\bigwedge \g)$ is a free algebra of dimension twice the dimension of the $0$-weight space of $L$ in the following remarkable cases:
\begin{enumerate}
\item $L$ is the 
{\it little adjoint} representation $L(\theta_s)$, i.e. the $\g$-module with highest weight the highest short root of $\g$;
\item  $\g$ is of type $A_{n-1}$ and $L=S^n(V)$ is the $n$-th symmetric power of the
defining representation $V$. Clearly, also its dual representation  shares  this property. 
\end{enumerate}
In order to build up  free generators in the little  adjoint case we are going to use a result of  Broer \cite {Broer}. Once we have the correct candidates, 
the proof of the statement will follow by slight modifications of the  machinery developed in \cite{DPP} for the adjoint representation. The case of $S^n(V)$ is dealt with by using classical invariant theory.\par
The adjoint  representation, the little adjoint representation, $S^n(V)$  and its dual are examples of small representations (see Section \ref{22}). For a small representation, and in fact only for a small representation,  one has  (see \cite {R})  that its multiplicity in $\bigwedge \g$ equals $2^r$ times the dimension of its zero weight space, a fact that we are going to use below.
It is natural to ask whether covariants of small modules have the nice behavior described above. It is easy to provide counterexamples. In this respect, using a result of Stembridge,
we are able to show that in type $A$ the adjoint representation, $S^n(V)$ and $S^n(V)^*$ are the only small modules whose covariants are freely generated over $\bigwedge(P_1,\dots,P_{r-1})$. Computer computations
show that no other example arises among small modules for any Lie algebra of rank at most 5.\par
The analysis of covariants of the little adjoint representation when $\g$ is of classical type has been also performed in \cite{Dolce} using classical invariant theory.
 \section{The little adjoint module}\label{22}
 Let $\g$ be a simple Lie algebra (over $\C$) of rank $r$. Fix a Cartan subalgebra
$\h$ in $\g$. Let $\D$ be the corresponding root system, $W$ the Weyl group, $\Dp$ a positive system and $\Pi$ the corresponding simple system. Let $\D_l,\D_s$ denote the sets of long and short roots, respectively; set also 
$\D^+_s=\D_s\cap\Dp,\,\D^+_l=\D_l\cap\Dp,\,\Pi_s=\D_s\cap\Pi,\,\Pi_l=\D_l\cap\Pi, r_s=|\Pi_s|,\,r_l=|\Pi_l|$.
Let $(\cdot,\cdot)$ denote the Killing form. If $\a\in \h^*$, we let $h_\a$ be the unique element of $\h$ such that $(h_\a,h)=\a(h)$ for all $h\in \h$. We use this form $(\cdot,\cdot)$ to identify $\g$ and $\g^*$
 when convenient. \par 
 Assume that $\g$ is not simply laced. Let $\theta_s$ be the highest (w.r.t. $\Dp$) short root. The irreducible $\g$-module $L(\theta_s)$ of highest weight $\theta_s$ is called the little adjoint representation. 
 
We are interested in the study of
$$LA:=Hom_\g(L(\theta_s),\bigwedge \g).$$
 $LA$ is the space of $\g$-equivariant maps from $L(\theta_s)$ to the space of multilinear alternating functions on $\g$. Clearly $LA$ is a (left or right) module over $(\bigwedge \g)^\g$. 
 %It is known that $(\bigwedge \g^*)^\g$ is  
%an exterior algebra $\bigwedge(P_1,\dots,P_r)$ over primitive generators $P_i$ of degree $2m_i + 1$, where the integers $m_i$ (with $m_1\le\dots\le m_r$) are the exponents of $\D$. 

If $L$ is a $\g$-module, we  denote by $L_0$ its zero weight subspace. 
We shall prove the following:
\begin{theorem}\label{maint}
$LA$ is freely generated over $\bigwedge(P_1,\dots,P_{r-1})$  by  $2\dim L(\theta_s)_0$  generators, which can be explicitly described.
\end{theorem}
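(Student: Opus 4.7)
The plan is to adapt the strategy of \cite{DPP} from the adjoint representation to the little adjoint one. A preliminary dimension count confirms that the proposed rank is correct: since $L(\theta_s)$ is a small module, \cite{R} gives that its total multiplicity in $\bigwedge\g$ equals $2^r\dim L(\theta_s)_0$, and this coincides with the dimension of a free module of rank $2\dim L(\theta_s)_0$ over the $2^{r-1}$-dimensional exterior algebra $\bigwedge(P_1,\dots,P_{r-1})$. Hence it will suffice to exhibit $2\dim L(\theta_s)_0$ explicit elements of $LA$ and prove that they are linearly independent over $\bigwedge(P_1,\dots,P_{r-1})$.

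The first step is the construction of candidate generators. Broer's theorem \cite{Broer}, applied to the small module $L(\theta_s)$, asserts that $\mathrm{Hom}_\g(L(\theta_s),S(\g))$ is a free $S(\g)^\g$-module of rank $\dim L(\theta_s)_0$. I would fix a homogeneous free basis $f_1,\dots,f_d$ of this polynomial module. A transgression/contraction against the primitive invariants, mirroring the device used in \cite{DPP}, turns each polynomial covariant $f_j$ into an exterior covariant $F_j\in LA$. A companion family $F'_1,\dots,F'_d$ is then obtained by the Hodge-type involution of \cite{DPP}, which amounts essentially to pairing with the top primitive $P_r$. This produces the required $2d = 2\dim L(\theta_s)_0$ candidates, all explicitly described in terms of the Broer basis.

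The heart of the argument is to verify that these $2d$ elements are linearly independent over $\bigwedge(P_1,\dots,P_{r-1})$. I would proceed via the Chevalley-type restriction isomorphism, writing $\bigwedge\g$ as $\bigwedge\h$ tensored with a harmonic complement; this identifies $LA$ with a space of $W$-equivariant maps out of $L(\theta_s)_0$ whose bigraded dimensions can be read off combinatorially. Under this identification, the families $\{F_j\}$ and $\{F'_j\}$ should correspond to two disjoint summands, one coming from polynomial generators multiplied by the unit and the other from their multiplication by the fundamental class in degree $\dim\h$, so that freeness over $\bigwedge(P_1,\dots,P_{r-1})$ is detectable on leading terms of a suitable filtration.

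The main obstacle is precisely this last step: showing that the transgressed Broer covariants have independent leading behaviour with respect to the filtration by polynomial and exterior degree. In \cite{DPP} this relied on an explicit analysis of how the highest-weight vectors of the adjoint copies interact with $dP_1,\dots,dP_r$; for $L(\theta_s)$ one must perform the analogous verification using only the primitives up to $P_{r-1}$. I expect the key input will be that a suitable normalization of the Broer basis makes the leading terms of the $f_j$ embed $L(\theta_s)_0$ into the harmonic covariants in a way compatible with the action of multiplication by $P_r$, so that both families together cover the full $2^r\dim L(\theta_s)_0$-dimensional space of covariants without redundancy.
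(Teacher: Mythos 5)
Your proposal correctly captures the outer architecture of the paper's argument: use Reeder's result for the dimension count $2^r\dim L(\theta_s)_0$, apply Broer's theorem to produce polynomial covariants, transgress them to $\bigwedge\g$ via a Koszul-type map, double the family with a boundary operator, and reduce everything to linear independence over $\bigwedge(P_1,\dots,P_{r-1})$. You also correctly identify that last step as the crux. But you do not actually fill it in: the phrases ``bigraded dimensions read off combinatorially,'' ``independent leading behaviour with respect to the filtration,'' and ``a suitable normalization of the Broer basis'' describe the goal rather than a mechanism, and no Chevalley-type restriction theorem for $\bigwedge\g$ of the kind you invoke is available in the form you would need it.

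The missing idea is a structural fact about $L(\theta_s)_0$ that drives the whole proof: the subgroup $H\subset W$ generated by long-root reflections acts trivially on $L(\theta_s)_0$, one has $W=W_s\ltimes H$, and $L(\theta_s)_0$ is precisely the reflection representation of $W_s$, a Weyl group of type $A_{r_s}$. This gives the tensor decomposition
$Hom_W(L(\theta_s)_0,S(\h))=S\bigl((J_H/J_H^2)^W\bigr)\otimes Hom_{W_s}\bigl(L(\theta_s)_0,S(q(L(\theta_s)_0))\bigr)$,
and Kostant's theorem for the type-$A$ reflection representation then produces free generators $F_i$ in the specific degrees $n_0,2n_0,\dots,r_sn_0$, with $n_0$ pinned down case by case in Proposition \ref{JH}. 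These degrees are indispensable: they force $e(f_i,u_{r_s-i+1})$ to land exactly in degree $2m_r+1$, the degree of $P_r$, and the resulting pairing is shown nonzero by the Newton-polynomial identity $(d\psi_{[k]},d\psi_{[g]})=(k+g-2)\psi_{[k+g-2]}$. It is this explicit pairing computation, not a leading-term filtration argument, that yields the independence over $\bigwedge(P_1,\dots,P_{r-1})$. One further small correction: the companion generators are $u_i=\partial\circ f_i$ with $\partial$ the transpose Koszul boundary; pairing against $P_r$ enters only in the independence verification, not in constructing the second family.
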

As an application, we can recover the following result of Bazlov \cite{B}.
\begin{corollary}\label{B}The Poincar\'e polynomial $GM_{\theta_s} (q)$,  describing the dimension of $LA$ in each degree, is given by
 \begin{equation}
\label{ps}GM_{\theta_s} (q)=(1+q^{-1}) \prod_{i=1}^{r-1}
(1+q^{2m_i+1})q^{m_r+1-2(r_s-1)r_l}\frac{1-q^{4r_lr_s}}{1-q^{4r_l}}.\end{equation} 
\end{corollary}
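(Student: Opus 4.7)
The corollary is a direct computation based on Theorem \ref{maint}. By that theorem, $LA$ is free over $\bigwedge(P_1,\dots,P_{r-1})$ on an explicit homogeneous basis of $2\dim L(\theta_s)_0$ elements. Since $P_i$ has degree $2m_i+1$, the exterior algebra $\bigwedge(P_1,\dots,P_{r-1})$ has Poincar\'e polynomial $\prod_{i=1}^{r-1}(1+q^{2m_i+1})$, and hence
\[
GM_{\theta_s}(q)=\prod_{i=1}^{r-1}(1+q^{2m_i+1})\cdot S(q),
\]
where $S(q):=\sum_g q^{\deg g}$ is the Poincar\'e polynomial of the span of the explicit free generators. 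The corollary is thus reduced to identifying $S(q)$.

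First I would invoke the classical equality $\dim L(\theta_s)_0=r_s$, so that $S(q)$ is a sum of $2r_s$ monomials. As a consistency check, the small-representation theorem of \cite{R} predicts that the total multiplicity of $L(\theta_s)$ in $\bigwedge\g$ equals $2^r\dim L(\theta_s)_0=2^r r_s$, matching $GM_{\theta_s}(1)=2^{r-1}\cdot 2r_s$.

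Next I would extract the degrees of these $2r_s$ generators from the explicit construction underlying the proof of Theorem \ref{maint} (built on Broer's theorem \cite{Broer}). The claim to verify is that the generators split into $r_s$ pairs $(g_j,g'_j)$, $j=1,\dots,r_s$, with $\deg g_j-\deg g'_j=1$ and with the larger degrees
\[
\deg g_j=m_r+1-2(r_s-1)r_l+4r_l(j-1)
\]
forming an arithmetic progression of common difference $4r_l$. Granting this, one computes
\[
S(q)=\sum_{j=1}^{r_s}\bigl(q^{\deg g_j}+q^{\deg g_j-1}\bigr)=(1+q^{-1})q^{m_r+1-2(r_s-1)r_l}\frac{1-q^{4r_lr_s}}{1-q^{4r_l}},
\]
and the desired identity \eqref{ps} follows upon multiplying by $\prod_{i=1}^{r-1}(1+q^{2m_i+1})$.

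The one substantial step is thus the degree bookkeeping for the generators coming out of Broer's construction; I expect this to be the main (though purely computational) obstacle, since both the pairing structure (responsible for the factor $1+q^{-1}$) and the arithmetic progression (responsible for the geometric sum $\frac{1-q^{4r_lr_s}}{1-q^{4r_l}}$) can then be read off by direct inspection.
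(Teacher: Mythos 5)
Your proposal is correct and follows the same route as the paper: you read the Poincar\'e polynomial off the free basis from Theorem \ref{maint}, pull out the factor $\prod_{i=1}^{r-1}(1+q^{2m_i+1})$, and reduce to computing the generating function $S(q)$ of the $2r_s$ free generators. The degree bookkeeping you correctly flag as the remaining step is exactly what the paper's formula \eqref{m} supplies: the generators $f_i=s\circ F_i$ sit in degrees $2n_0 i$ and their partners $u_i=\partial\circ f_i$ in degrees $2n_0 i-1$, and \eqref{m} (verified case-by-case in the paper) gives $n_0=\frac{m_r+1}{2}-(r_s-1)r_l$, with the extra identity $n_0=2r_l$ when $r_s>1$ turning $\sum_{i=1}^{r_s}q^{2n_0 i}$ into the stated geometric expression $q^{m_r+1-2(r_s-1)r_l}\frac{1-q^{4r_lr_s}}{1-q^{4r_l}}$. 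Your written degree formula $\deg g_j=m_r+1-2(r_s-1)r_l+4r_l(j-1)$ agrees with $2n_0 j$ precisely because of this identity, so the two presentations coincide; the only thing your write-up leaves open is the inspection-based verification of \eqref{m}, which the paper likewise asserts without detail.
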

As explained in the Introduction, our main tools are a result of Broer on covariants of small modules and the machinery developed in   \cite{DPP} to prove Êthe analogue of Theorem \ref{maint}  for the adjoint representation.\par
Let us describe Broer's result. Recall that a finite dimensional $\g$-module $L$ is called {\it small}Ê if twice a root is not a weight of $L$.  \begin{theorem}\cite[Theorem1]{Broer}\label{broer} Let $L$ be a small $\g$-module. Then 
$Hom_\g(L,S(\g))$ is  isomorphic by restriction to 
$Hom_W(L_0, S(\h))$ as a module for $S(\h)^W\simeq S(\g)^\g$. \end{theorem}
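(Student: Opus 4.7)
The strategy is to construct the natural restriction map, establish its injectivity by a density argument, and then reduce surjectivity to a graded Hilbert series comparison via Kostant's harmonic decomposition, with the smallness hypothesis playing the decisive role at the final step.

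First I would construct the candidate map $\rho : Hom_\g(L, S(\g)) \to Hom_W(L_0, S(\h))$ explicitly. For $\phi \in Hom_\g(L, S(\g))$ and $v \in L_0$, the $\g$-equivariance of $\phi$ together with $\h \cdot v = 0$ gives $\phi(v) \in S(\g)^\h$; composing with the Chevalley restriction $\mathrm{res}\colon S(\g) \to S(\h)$ yields a linear map $\rho(\phi)\colon L_0 \to S(\h)$, $v \mapsto \mathrm{res}(\phi(v))$. Because $W \simeq N_G(\h)/Z_G(\h)$ acts on $L_0$ via the $N_G(\h)$-action on $L$ and on $\h$ as the Weyl group, the $G$-equivariance of $\phi$ translates into $W$-equivariance of $\rho(\phi)$; moreover $\rho$ is graded and $S(\g)^\g$-linear under the Chevalley isomorphism $S(\g)^\g \simeq S(\h)^W$.

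Injectivity of $\rho$ follows by density. The regular semisimple elements form a Zariski-dense subset of $\g$ and each is $G$-conjugate to an element of $\h^{reg}$, so a $G$-equivariant polynomial map $\g \to L^*$ is determined by its restriction to $\h$; this restriction is $\h$-equivariant and therefore recovered from its values on $L_0$. For surjectivity I would appeal to Kostant's separation of variables $S(\g) \cong S(\g)^\g \otimes H(\g)$ and the Chevalley--Shephard--Todd decomposition $S(\h) \cong S(\h)^W \otimes H(\h)$, which make both $Hom_\g(L, S(\g))$ and $Hom_W(L_0, S(\h))$ graded free modules over $S(\g)^\g \simeq S(\h)^W$. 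A graded, $S(\g)^\g$-linear injection between graded free modules with equal Hilbert series is automatically an isomorphism, so the task reduces to matching the two Hilbert series degree by degree.

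The main obstacle, and the point where the smallness hypothesis is indispensable, is precisely this Hilbert series identity. The Hilbert series of $Hom_W(L_0, S(\h))$ equals $\bigl(\sum_\mu m_\mu(L_0)\, f_\mu(q)\bigr) / \prod_{i=1}^r (1-q^{d_i})$, where $m_\mu(L_0)$ is the multiplicity of the irreducible $W$-representation $\mu$ in $L_0$ and $f_\mu(q)$ is its fake degree, because $H(\h)$ realizes the regular representation of $W$. The Hilbert series of $Hom_\g(L, S(\g))$ is by definition the generalized exponent generating function of $L$ divided by the same product; Kostant's theorem bounds the generalized exponent series above by $\sum_\mu m_\mu(L_0) f_\mu(q)$, and the bound is sharp exactly when no weight of $L$ equals twice a root, i.e.\ exactly when $L$ is small. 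Granting this identification, the degree-preserving injection $\rho$ is forced to be an isomorphism, completing the argument.
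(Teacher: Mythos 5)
This is a theorem the paper \emph{cites}, not proves: it is quoted verbatim from Broer \cite{Broer} and no proof appears in the paper at all. So there is nothing in the paper to compare your argument against; the right question is whether your sketch would stand on its own, and here there is a serious gap.

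Your construction of the restriction map $\rho$ and the injectivity argument via density of regular semisimple elements are both fine and standard: the centralizer of a regular $x\in\h$ is the maximal torus, so the value of a $\g$-equivariant polynomial map $\g\to L^*$ at such an $x$ lies in $(L^*)_0=(L_0)^*$, and the $W$-equivariant restriction $\h\to(L_0)^*$ determines the map on the dense regular semisimple locus. The problem is the surjectivity step. You reduce to the assertion that the graded Hilbert series of $Hom_\g(L,S(\g))$ (Kostant's generalized exponents, normalized by $\prod(1-q^{d_i})$) coincides with that of $Hom_W(L_0,S(\h))$ (fake degrees of $L_0$), and you say ``Kostant's theorem bounds the generalized exponent series above \dots and the bound is sharp exactly when $L$ is small.'' But this is not a citable fact: the upper bound is just a restatement of the injectivity you already proved, and the claim that equality holds \emph{precisely} under the smallness condition is not a consequence of Kostant's separation of variables --- it \emph{is} Broer's theorem, merely translated into Hilbert-series language. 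You have circled back to the statement without supplying the mechanism by which ``$2\alpha$ is never a weight of $L$'' forces the cokernel to vanish. Broer's actual argument is of a different nature: he analyzes the restriction map geometrically, reducing the surjectivity to a local statement along the subregular locus via an $\sll_2$-computation, where the hypothesis that twice a root is not a weight enters concretely to kill the obstruction. Absent an argument of that kind (or an independent computation matching the two Hilbert series), the proposal establishes only injectivity plus an equality of ranks as free $S(\h)^W$-modules, which is strictly weaker than the theorem.
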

It is easy to check (see \cite{R}) that  an highest weight module $L(\lambda)$ is small if and only if $\lambda\not\geq 2\eta$ for any dominant root $\eta$ of 
$\g$. In particular, both the adjoint and the little adjoint representations are small.
In order to apply Theorem \ref{broer}, we start with a simple observation. Let $H$ be the subgroup of $W$ generated by the reflections $s_\alpha$ with $\alpha$ long. 
\begin{lemma}\label{Htrivial}
$H$ acts trivially on $L(\theta_s)_0$.
\end{lemma}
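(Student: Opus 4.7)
The plan is to realize $\g$ as the fixed-point subalgebra of a diagram automorphism on a simply-laced Lie algebra, and identify both $L(\theta_s)_0$ and the long reflections of $\g$ in this picture. Concretely, there is a simply-laced $\tilde\g$ with an automorphism $\sigma$ (of order $2$ when $\g$ is of type $B_n$, $C_n$, or $F_4$, and of order $3$ when $\g$ is of type $G_2$) such that $\g=\tilde\g^\sigma$; the four relevant pairs are $(A_{2n-1},C_n)$, $(D_{n+1},B_n)$, $(E_6,F_4)$, and $(D_4,G_2)$. I would fix a $\sigma$-stable Cartan $\tilde\h$ of $\tilde\g$ with $\tilde\h^\sigma=\h$ and decompose $\tilde\g=\bigoplus_\zeta \tilde\g_\zeta$ into $\sigma$-eigenspaces, so that $\tilde\g_1=\g$. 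The first structural input I would use is the classical fact that every non-trivial eigenspace $\mf{m}:=\tilde\g_\zeta$ ($\zeta\neq 1$), viewed as a $\g$-module, is isomorphic to $L(\theta_s)$.

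Then I would identify the zero weight space. A direct check in each folding shows that no root of $\tilde\g$ restricts to $0$ on $\h$; consequently the $\h$-weight decomposition of $\tilde\g$ is the restriction of its $\tilde\h$-weight decomposition, and the $\h$-zero weight subspace of $\mf{m}$ coincides with $\mf{m}\cap\tilde\h$, the $\zeta$-eigenspace of $\sigma$ acting on $\tilde\h$. So under the isomorphism $\mf{m}\cong L(\theta_s)$ the subspace $L(\theta_s)_0$ corresponds to $\tilde\h_\zeta$; in particular its dimension is $r_s$, as expected.

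The second structural input is that the restriction map $\tilde W^\sigma\to W$ is an isomorphism, and under it the long roots of $\g$ correspond to the $\sigma$-fixed roots of $\tilde\g$: for $\alpha\in\D_l$ there is a unique $\sigma$-fixed $\tilde\alpha\in\tilde\D$ with $\alpha=\tilde\alpha|_\h$, and $s_\alpha$ is the restriction to $\h$ of $s_{\tilde\alpha}\in\tilde W$. With this in hand the conclusion is a one-line calculation: for any $h\in\tilde\h_\zeta$ with $\zeta\neq 1$, $\sigma$-invariance of $\tilde\alpha$ yields
$$\tilde\alpha(h)=\tilde\alpha(\sigma^{-1}(h))=\zeta^{-1}\tilde\alpha(h),$$
forcing $\tilde\alpha(h)=0$. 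Hence $s_{\tilde\alpha}$ is the identity on $\tilde\h_\zeta$, so $s_\alpha$ acts trivially on $L(\theta_s)_0$. As $H$ is generated by these long reflections, the lemma follows.

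The step I expect to require the most care is marshalling the structural folding inputs uniformly: verifying $\mf{m}\cong L(\theta_s)$ as $\g$-modules, and checking the bijection between long roots of $\g$ and $\sigma$-fixed roots of $\tilde\g$. These are standard folklore but must be spelled out across the four folding types, with slight extra bookkeeping in the triality case $(D_4,G_2)$ where $\sigma$ has order $3$ and one works with a complex eigenspace.
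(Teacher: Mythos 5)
Your argument is correct, but it takes a genuinely different and considerably heavier route than the paper's one-line proof. The paper observes simply that the weights of $L(\theta_s)$ are exactly $\D_s\cup\{0\}$; hence for $\alpha\in\D_l$ the root vectors $e_\alpha, f_\alpha$ map $L(\theta_s)_0$ into the (nonexistent) weight spaces of weight $\pm\alpha$, so they act as zero, and therefore the reflection lift $\exp(e_\alpha)\exp(-f_\alpha)\exp(e_\alpha)$ fixes $L(\theta_s)_0$ pointwise. Your folding argument --- realizing $\g=\tilde\g^\sigma$, identifying $L(\theta_s)_0$ with $\tilde\h_\zeta$, matching long reflections of $\g$ with reflections in $\sigma$-fixed roots of $\tilde\g$, and then computing $\tilde\alpha(h)=\zeta^{-1}\tilde\alpha(h)=0$ --- is valid, and it has the pedagogical advantage of installing the very machinery the paper needs for the subsequent lemma (where the folding realization is used to identify $L(\theta_s)_0$ with the reflection representation of $W_s$). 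But as a standalone proof of this lemma it imports several nontrivial structural facts (each eigenspace $\tilde\g_\zeta\cong L(\theta_s)$, no $\tilde\g$-root restricts to zero on $\h$, the long-root/$\sigma$-fixed-root bijection) that the paper's elementary weight argument avoids entirely. If one is going to set up the folding anyway, your route is tidy; if the goal is just this lemma, the paper's argument is decisively shorter.
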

\begin{proof}
The weights of $L(\theta_s)$ are precisely $\D_s\cup\{0\}$. It follows that, if $\alpha$ is a long root, and $e_\a,f_\a$ are root vectors in $\g_\a,\,\g_{-\a}$, respectively,  then $\exp(e_\alpha)\exp(-f_\alpha)\exp(e_\alpha)$ acts trivially on $L(\theta_s)_0$.
\end{proof}

Let $W_s$ be the reflection subgroup of $W$ generated by the reflections $s_\alpha$ with $\alpha \in \Pi_s$.

\begin{lemma} \
\begin{enumerate}
\item\label{1} $W= W_s\ltimes H$ so $W/H$ is canonically isomorphic to $W_s$.
\item\label{2} The isomorphism in \eqref{1} turns $L(\theta_s)_0$ into a $W_s$-module isomorphic to the reflection representation of $W_s$.\end{enumerate}
\end{lemma}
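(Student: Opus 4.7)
The plan is to handle (1) and (2) together, letting the induced action of $W$ on $L(\theta_s)_0$ detect the quotient $W/H$.

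First, $H\trianglelefteq W$: since $\g$ is non-simply laced, $W$ permutes the set $\D_l$ of long roots, so $w s_\alpha w^{-1}=s_{w\alpha}\in H$ for any $\alpha\in\D_l$ and $w\in W$. Moreover $W=W_sH$, because $W$ is generated by the simple reflections and each of these lies in $W_s$ or in $H$ according as the underlying simple root is short or long, and $H$ is normal. The remaining content of (1) is therefore the triviality of $W_s\cap H$.

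To prove this together with (2), I would use Lemma~\ref{Htrivial}: $H$ acts trivially on $L(\theta_s)_0$, so the $W$-action descends to a representation of $W/H$, and via the surjection $W_s\twoheadrightarrow W/H$ just established we obtain a representation $\rho\colon W_s\to GL(L(\theta_s)_0)$. The key step is to identify $\rho$ with the reflection representation of $W_s$. For this I would check that $\dim L(\theta_s)_0=r_s$ (either from the weight multiplicities for the little adjoint, or case by case in types $B_n,C_n,F_4,G_2$), and that each $s_\alpha$ with $\alpha\in\Pi_s$ acts as a reflection: by $\mathfrak{sl}_2^\alpha$-theory, since $\alpha\in\D_s$ occurs with multiplicity one in $L(\theta_s)$ and the $h_\alpha$-eigenvalues on $L(\theta_s)$ lie in $\{-2,-1,0,1,2\}$, there is a unique three-dimensional $\mathfrak{sl}_2^\alpha$-submodule of highest weight $2$ in $L(\theta_s)$, namely the one generated by $L(\theta_s)_\alpha$, and its zero-weight vector $v_\alpha\in L(\theta_s)_0$ is the unique $(-1)$-eigenvector of $s_\alpha$ on $L(\theta_s)_0$. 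Once $\rho$ is identified with the faithful reflection representation, $W_s\cap H=\ker\rho=\{1\}$ finishes (1), and (2) is automatic.

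The main obstacle is the last identification of $\rho$ as \emph{the} reflection representation, rather than some other $r_s$-dimensional representation on which the short simple reflections happen to act as reflections. I would address it by computing $s_\beta v_\alpha$ for $\alpha,\beta\in\Pi_s$ via $\mathfrak{sl}_2^\alpha$- and $\mathfrak{sl}_2^\beta$-theory, to exhibit the standard reflection formulas with the Cartan integers of $\Pi_s$; a safe fallback is a type-by-type inspection using the explicit models of $L(\theta_s)$ (the standard representation for $B_n$, $\bigwedge^2 V$ modulo the symplectic form for $C_n$, and the $26$- and $7$-dimensional representations for $F_4$ and $G_2$), which also directly verifies $|W|=|W_s|\cdot|H|$ and hence $W_s\cap H=\{1\}$ by counting.
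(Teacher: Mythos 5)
Your proposal is correct in outline but takes a genuinely different route from the paper. The paper simply cites Panyushev (Proposition~2.1 of \cite{Pan}) for part~(1), and for part~(2) it realizes $\g$ as the fixed subalgebra of a diagram automorphism $\s$ of a larger simple algebra $\mf a$, identifies $L(\theta_s)_0$ with the $\xi$-eigenspace of $\s$ on a $\s$-stable Cartan $\h'$, and writes down an explicit bijection $\mathrm{Span}(\Pi_s)\to L(\theta_s)_0$ via orthogonal projection $\alpha_{|\h}\mapsto\pi(h_\alpha)$, then checks directly that it intertwines the $W_s$-action using that $s_\gamma=\prod_i s_{\s^i(\alpha)}$ commutes with $\s$. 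You instead prove $(1)$ from first principles (normality of $H$ since $W$ preserves $\D_l$; $W=W_sH$ since each simple reflection lands in $W_s$ or $H$; triviality of $W_s\cap H$ from faithfulness of the induced action) and handle $(2)$ by $\mf{sl}_2^\alpha$-theory, showing each $s_\alpha$, $\alpha\in\Pi_s$, acts with a single $-1$ eigenvalue on the $r_s$-dimensional space $L(\theta_s)_0$ because the $\alpha$-string through $\alpha$ gives the unique $V_2$ touching weight $0$. This is sound: indeed $\dim L(\theta_s)_0=r_s$ in every non-simply-laced type, and the only $\mf{sl}_2^\alpha$-submodule with nonzero $h_\alpha$-weights meeting $L(\theta_s)_0$ is the one through $L(\theta_s)_\alpha$. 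You are right that the remaining gap---each $s_\alpha$ acting as a reflection does not by itself pin down the $W_s$-module---is the crux; it does need to be closed either by computing $s_\beta v_\alpha$ against the Cartan integers of $\Pi_s$ (which form a type-$A$ system) or by the short type-by-type check you propose. What your approach buys is independence from both the Panyushev reference and the auxiliary folding data $(\mf a,\s)$; what it costs is the case-by-case verification that $\dim L(\theta_s)_0=r_s$ and the extra argument identifying the representation, both of which the paper's explicit intertwiner handles uniformly.
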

\begin{proof} The proof of 
\eqref{1} is given in \cite{Pan}, Proposition 2.1. We now prove \eqref{2}. By Lemma \ref{Htrivial}, $L(\theta_s)_0$ is a $W/H$-module. The isomorphism given in \eqref{1} is the one induced by the embedding of $W_s$ in $W$. To prove our claim we need to provide a bijective map $Span
(\Pi_s)\to L(\theta_s)_0$ and check that this map  intertwines the action of $W_s$. We realize $L(\theta_s)_0$ explicitly as follows: $\g$ is the fixed point subalgebra of a diagram automorphism $\s$ of a larger simple Lie algebra $\mf{a}$. Let $k$ be the order of $\s$ ($k=2$ or $3$). Let $\xi$ be a primitive $k$-th  root of unity. Then $L(\theta_s)$ is the $\xi$-eigenspace of  $\s$ in $\mf{a}$. Let $\h'$ be a $\s$-stable Cartan subalgebra of $\mf{a}$ containing $\h$. Since there is no root $\alpha$ of $\mf{a}$ such that $\alpha_{|\h}=0$, we have that $L(\theta_s)_0$ is the $\xi$-eigenspace of $\s_{|\h'}$. Let $\Pi'$ be the set of simple roots of $\mf{a}$ and $\Pi'_0$ the subset  of simple roots fixed by $\s$. Let $\Pi'_c$ be a connected component of $\Pi'\backslash\Pi'_0$. Then the map $\alpha\mapsto \alpha_{|\h}$ identifies $Span
(\Pi'_c)$ with $Span
(\Pi_s)$. Let $\pi$ be the orthogonal projection $\h'\to L(\theta_s)_0$. We define a map $Span(\Pi_s)\to L(\theta_s)_0$ by
$$\alpha_{|\h}\mapsto \pi(h_\alpha)=\frac{1}{k}\sum_{i=0}^{k-1}\xi^i\s^{k-i}(h_{\alpha}).$$
If $\alpha\in Span(\Pi'_c)$, then the $\s^i(\alpha)$ are pairwise orthogonal, hence the above formula implies that the map is injective. Since $\dim L(\theta_s)_0=\frac{1}{k-1}(rank(\mf{a})-rank(\g))$, it is easy to check that $\dim Span(\Pi_s)=\dim L(\theta_s)_0$. It follows that our map is bijective.

If $ \gamma\in \Pi_s$ then $\gamma=\a_{|\h}$ with $\a\in\Pi'_c$. 
%Let $e_{\gamma}, \gamma^\vee,f_{ \gamma}$ be Chevalley generators for $\g$. 
Then $e_{\gamma}=\sum_{i=0}^{k-1}e_{\s^i(\alpha)}$ and  $f_{\gamma}=\sum_{i=0}^{k-1}f_{\s^i(\alpha)}$. Since the roots in the $\s$-orbit of $\alpha$ are orthogonal, we see that 
$$
s_\gamma=\exp(e_\gamma)\exp(-f_\gamma)\exp(e_\gamma)=\prod_{i=0}^{k-1}s_{\s^i(\alpha)},
$$
so the action of $s_{\gamma}$ on $\h'$ commutes with $\s$. It follows that, if $\be\in Span(\Pi_s)$ and $\be=\be'_{|\h}$ with $\be'\in\Pi'_c$, then
$$
s_\gamma(\be)\mapsto \pi(h_{s_\gamma(\be')})=s_\gamma(\pi(h_{\be'})).
$$
\end{proof}
By \cite{Pan}, $\mf{l}=\h\oplus \sum\limits_{\alpha\in\D_l}\g_\alpha$ is a semisimple equal rank subalgebra of $\g$ whose Weyl group is $H$. Obviously, the  action of $H$ on $\h$ is the reflection representation. 
Let $J_H=S(\h)^H$. Since $H$ is a normal subgroup of  $W$, it is clear that $J_H$ is $W$-stable.  
%Set $r_s=\dim L(\theta_s)_0$. By the previous lemma, $r_s$ is the rank of $W_s$. 

\begin{proposition}\label{JH}
 $\dim Hom_W(L(\theta_s)_0,J_H/J_H^2)=1$ and $\dim (J_H/J_H^2)^W=r_l$. 
\end{proposition}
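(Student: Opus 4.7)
The plan is to reduce the proposition to a question about the $W_s$-module structure of $J_H/J_H^2$ and then verify one structural decomposition by a short case analysis. Since $H$ acts trivially on $J_H$ by definition and on $L(\theta_s)_0$ by Lemma~\ref{Htrivial}, and since $W = W_s \ltimes H$ by part \eqref{1} of the preceding lemma, both quantities in the statement reduce to their $W_s$-analogues
\[
\dim Hom_{W_s}(L(\theta_s)_0, J_H/J_H^2) \quad\text{and}\quad \dim (J_H/J_H^2)^{W_s}.
\]
By part \eqref{2} of that lemma I may identify $L(\theta_s)_0$ with the reflection representation of $W_s$, which is irreducible of dimension $r_s$.

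The key claim is that, as $W_s$-modules,
\[
J_H/J_H^2 \;\cong\; r_l\cdot\text{triv} \;\oplus\; (\text{reflection representation of } W_s).
\]
Granting this, both halves of the proposition follow immediately: $(J_H/J_H^2)^{W_s}$ has dimension $r_l$, and Schur's lemma gives the Hom-dimension $1$. The total dimension $r_l+r_s=r$ already matches the number of homogeneous polynomial generators of $J_H$, so the content of the claim lies entirely in identifying the $W_s$-representation on each graded piece of $J_H/J_H^2$.

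I would establish the claim by inspection of the four non-simply-laced types, using in each case the description of $H$ as the Weyl group of the long root subsystem. In type $B_n$ one has $H=W(D_n)$ with generators the power sums $p_2,p_4,\ldots,p_{2(n-1)}$ together with a Pfaffian-type generator $\sigma_n$ of degree $n$; the non-trivial element of $W_s\cong\mathbb{Z}/2$ fixes the power sums and negates $\sigma_n$, giving $(n-1)$ trivial summands and one sign summand. In type $C_n$ the long roots are $n$ orthogonal copies of $A_1$, so $H=(\mathbb{Z}/2)^n$ with generators $x_1^2,\ldots,x_n^2$ transitively permuted by $W_s=S_n$, yielding trivial plus standard. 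For $G_2$, $J_H=S(\h)^{W(A_2)}$ has generators in degrees $2$ and $3$; the former is $W(G_2)$-invariant hence $W_s$-fixed, while the latter is forced to transform by the sign character because $3$ is not among the degrees of $W(G_2)$, which are $2$ and $6$. The delicate case is $F_4$: here $H=W(D_4)$ has two degree-$4$ generators on which $W_s=S_3$ acts through triality, and since $4$ is not among the degrees of $W(F_4)$ (namely $2,6,8,12$) the $2$-dimensional degree-$4$ piece of $J_H/J_H^2$ has no $W_s$-fixed vector, which forces it to be the standard $2$-dimensional representation of $S_3$; the degree-$2$ and degree-$6$ generators of $J_H$ are $W(F_4)$-invariant and hence $W_s$-trivial. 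The $F_4$ subcase is the main obstacle, because there the $W_s$-action cannot be read off directly from combinatorial features of the generators but must be pinned down by comparing primitive degrees of $W$ and $H$.
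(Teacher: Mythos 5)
Your reduction to the $W_s$-module structure of $J_H/J_H^2$, and your plan to identify that structure case by case, match the paper's strategy exactly; the $C_r$, $B_r$, and $G_2$ cases are handled correctly and by essentially the same arguments. The $F_4$ case, however, has a genuine gap. You argue that since $4$ is not among the degrees $2,6,8,12$ of $W(F_4)$, the two-dimensional span of the degree-$4$ generators $f_1,f_2$ of $J_H=S(\h)^{W(D_4)}$ has no $W_s$-fixed vector, and you conclude that this forces it to be the standard two-dimensional representation of $W_s\cong S_3$. The first step is right, but the conclusion does not follow: a two-dimensional $S_3$-module with no fixed vector need not be the standard representation; it could be two copies of the sign character. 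Your observation that ``the $W_s$-action must be pinned down by comparing primitive degrees of $W$ and $H$'' is the right instinct, but the actual comparison you carry out (in degree $4$) is not enough to distinguish these two possibilities.

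The paper closes exactly this gap by pushing the degree comparison up to degree $8$. If $W_s$ acted on $\mathrm{Span}(f_1,f_2)$ through the sign character, then $f_1^2$, $f_1f_2$, $f_2^2$ would all be $W$-invariant; together with $h_1^4$ and $h_1h_2$ (where $h_1,h_2$ are chosen to be $W$-invariant generators in degrees $2$ and $6$) this would force $\dim S^8(\h)^{W}\ge 5$. But the degrees of $W(F_4)$ are $2,6,8,12$, so $\dim S^8(\h)^W=3$, a contradiction. Since the only remaining option is the standard representation, this completes the $F_4$ case. You should add this dimension count (or an equivalent argument ruling out the sign-plus-sign possibility) to make your proof complete.
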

\begin{proof}
The proof is a case by case check. In each case we will provide an explicit realization of the reflection representation of $W_s$ in a suitable $W$-stable space of basic invariants for $H$.
\vskip5pt
{\sl Type $C_r$.}  In this case  $\mf{l}$ is the product of $r$ copies of $A_1$,  so   $H=(\mathbb Z/2\mathbb Z)^r$. Let $\Dp_l=\{\beta_1,\dots,\beta_r\}$. Clearly $J_H/J_H^2\simeq Span(h^2_{\beta_1},\ldots,
h^2_{\beta_r})$.  It is easy to check that $W_s\simeq S_r$ and its action  on $\h$ is given by the permutation representation on the basis $\{h_{\beta_i}\}$. It follows that  $J_H/J_H^2$ is the sum of the reflection representation of $S_r$ and a 1-dimensional invariant space.
\vskip5pt
{\sl Type $B_r$.} In this case $\mf{l}$ is of type $D_r$, so $J_H=\C[p_0,p_1,\cdots,p_{r-1}]$, where $p_i$ is a basic invariant for $B_r$ of degree $2i$ if $i=1,2,\cdots,r-1$ and $p_0=\prod_{\alpha\in\D_s^+}h_\alpha$. Since $W_s$ has order $2$, generated by the reflection $s_\a$ w.r.t. the unique short simple root $\alpha$, we see that $\C p_0$ affords the reflection representation of $W_s$ and that $(J_H/J_H^2)^W\simeq Span(p_1,\ldots,p_{r-1})$.
\vskip5pt
{\sl Type $G_2$.} In this case $\mf{l}$ is of type $A_2$, so there are basic invariants $p_1$, $p_2$ for $H$ in degree $2$ and $3$ respectively. We can choose $p_1$ to be the basic invariant of degree $2$ for $W$. In this case $W_s\simeq \mathbb Z/2\mathbb Z$. Since $J_H\cap S^3(\h)=\C p_2$ we see that $\C p_2 $ is $W_s$-stable. Since $p_2$ is not $W$-invariant, we see that $W_s$ acts on $\C p_2$ by its reflection representation.
\vskip5pt
{\sl  Type $F_4$.} In this case $\mf{l}$ is of type $D_4$ and $W_s\simeq S_3$. Let $h_1,f_1,f_2, h_2$ be basic invariants for $H$ of degree $2,4,4,6$ respectively. The basic invariants for $W$ occur in degrees $2,6,8,12$. We can choose $h_1,h_2$ to be basic invariants for $W$. We claim that the action of $W_s$ on $Span(f_1,f_2)$ is given by its reflection representation. Indeed, since $Span(f_1,f_2)$ cannot contain invariants for $W_s$, the only other possibility is that $W_s$ acts on  $f_1,f_2$ by the sign representation. If this were the case, we would have that $\dim S^8(\h)^W\ge 5$. But we know that $\dim S^8(\h)^W=3$.
\end{proof}

\begin{proof}[Proof of Theorem \ref{maint}] Choose $q\in Hom_W(L(\theta_s)_0,J_H)$ so that $q$ induces the embedding of $L(\theta_s)_0$ in $J_H/J_H^2$ provided by Proposition \ref{JH}. We can choose $q$ to be homogeneous and we let $n_0$ be the degree of $q$. We can write 
$$
J_H/J_H^2=q(L(\theta_s)_0)\oplus (J_H/J_H^2)^W
$$
as a $W$-module.  Using the fact that $J_H=S(J_H/J_H^2)$ and Lemma \ref{Htrivial}, we can write
\begin{equation}\label{Hom}
Hom_W(L(\theta_s)_0,S(\h))=Hom_W(L(\theta_s)_0,J_H)=S((J_H/J_H^2)^W)\otimes Hom_{W_s}(L(\theta_s)_0,S(q(L(\theta_s)_0))).
\end{equation}
Since the action of $W_s$ on $L(\theta_s)_0$ is the reflection representation and $W_s$ is a reflection group of type $A$, it is known (see \cite{Kostant}) that $Hom_{W_s}(L(\theta_s)_0,S(L(\theta_s)_0))$ is freely generated over $S(L(\theta_s)_0)^{W_s}$ by $r_s$ homogeneous generators $g_1,\dots, g_{r_s}$ in degrees $1,2,\dots,r_s$. It follows from \eqref{Hom} that $q(g_i)$ ($i=1,\ldots, r_s$) are free generators for $Hom_W(L(\theta_s)_0,S(\h))$ over $S(\h)^W$ in degrees $n_0,2n_0,\ldots, r_sn_0$.

Theorem \ref{broer} now provides free generators $F_1,\ldots, F_{r_s}$ for $Hom_\g(L(\theta_s),S(\g))$ over $S(\g)^\g$ in degrees $n_0,2n_0,\ldots, r_sn_0$. 
Let $\delta:\bigwedge^i \g\to \bigwedge^{i+1} \g$ be the Koszul differential. Let $s:S(\g)\to\bigwedge \g$ be the map extending $\delta_{|\bigwedge^1\g}:\g\to\bigwedge^2\g$ to $S(\g)$. Since $s$ is a $\g$-equivariant map, composing with $s$ defines a map $Hom_\g(L,S(\g))\to Hom_\g(L,\bigwedge \g)$. Set $f_i=s\circ F_i$ and $u_i=\partial \circ f_i$. Here $\partial={}^t\delta$.\par We claim that $f_i,u_i$ are  free generators for $Hom_\g(L(\theta_s),\bigwedge \g)$ over $\bigwedge (P_1,\ldots, P_{r-1})$. From now on, we may proceed as in  \cite{DPP}.  Let us sketch the main steps.
By \cite[Corollary 4.2]{R}, we have that  $\dim LA=2^r\dim L(\theta_s)_0$,  hence it suffices to prove that $f_i,u_i$ are linearly independent over $\bigwedge (P_1,\ldots, P_{r-1})$. Writing a linear combination of $f_i,u_i$ with coefficients in $\bigwedge (P_1,\ldots, P_{r-1})$ and applying $\delta$ one readily reduces to prove that the $f_i$ are independent. 
Identify $Hom_\g(L(\theta_s),\bigwedge \g)$ with $(\bigwedge \g\otimes L(\theta_s))^\g$ and  fix a symmetric invariant bilinear form $\langle\cdot,\cdot\rangle$  on $L(\theta_s)$. For $a,b,\in \bigwedge \g,\,x,y\in L(\theta_s)$ we set 
$$e(a\otimes x,b\otimes y)=\langle x,y \rangle a\wedge b.$$ If instead $a,b\in S(\g)$, then we set
$$(a\otimes x,b\otimes y)=\langle x,y \rangle a b.$$\par
Now, as in \cite[Lemma 2.6]{DPP},
  the claim about the independence of the $f_i$ boils down to showing that 
\begin{equation}\label{f}
e(f_i,u_{r_s-i+1})=c_iP_r,\ c_i\ne 0. 
\end{equation}
Let $d:S(\g)\to S(\g)\otimes \g$ be the usual differential on functions and $m:\bigwedge\g\otimes \g\to \bigwedge\g$ the multiplication map. Define $t:S(\g)\to \bigwedge\g$ setting $m\circ (s\otimes 1)\circ d$.
The argument given in \cite{DPP} to prove formula (2.21) therein shows that, up to a nonzero constant,
$$
e(f_i,u_{r_s-i+1})=t((F_i,F_{r_s-i+1})).
$$
 Now observe that, by inspection, we have
\begin{equation}\label{m}
n_0=\begin{cases}\frac{m_r+1}{2}\quad&\text{if $r_s=1$,}\\
\frac{m_r+1}{2}-(r_s-1)r_l=2r_l\quad&\text{if $r_s>1$.}
\end{cases}
\end{equation}
This implies that $2n_0i+2n_0(r_s-i+1)-1=2n_0(r_s+1)-1=2m_r+1$. 

Recall that the range of the map $t$, when restricted to $S(\g)^\g$, is, by  e. g. \cite[Theorem 64]{K}, the space of primitive elements in $\bigwedge \g$,
so it is enough to check that $t((F_i,F_{r_s-i+1}))\ne 0$. This is equivalent to checking that, if $J^+$ is the ideal in $S(\g)^\g$ of elements of positive degree, then $(F_i,F_{r_s-i+1})\not\in (J^+)^2$. As in Lemma 2.8 in \cite{DPP}, we see that the restriction of $(F_i,F_{r_s-i+1})$ to $\h$ is $(q(g_i),q(g_{r_s-i+1}))$.  In  the proof of Proposition \ref{JH}, we identified $J_H/J_H^2$ with $\{(x_1,\ldots,x_{r_s+1})\in\C^{r_s+1}\mid \sum_ix_i=0\}$ in such a way that the action of $W_s$ on $J_H/J_H^2$  intertwines with the standard action of the symmetric group $S_{r_s+1}$ on the latter space. 
With this identification,  the generators $g_i$ can be chosen to correspond   precisely  to the differentials of     normalized Newton polynomials
$\displaystyle{\psi_{[i+1]}:=\frac1{i+1}\sum\limits_{k=1}^{r_s+1}x_k^{i+1}}$. We can conclude using the formula 
$$
(d\psi_{[k]},d\psi_{[g]})=  \sum_{i=1}^{r_s+1}x_i^{k+g-2}=(k+g-2)\psi_{[k+g-2]}
$$
(see \cite{DPP}).
\end{proof}
\begin{proof}[Proof of Corollary \ref{B}] The proof of   Theorem \ref{maint} shows  that 
$$GM_{\theta_s}(q)=(1+q^{-1})\prod_{i=1}^{r-1}
(1+q^{2m_i+1})q^{2n_0}(1+q^{2n_0}+\ldots+q^{2(r_s-1)n_0}).$$
Now formula \eqref{ps} follows from \eqref{m}.
\end{proof}  
\section{The module $S^n(V)$}\label{3}
In this section, $V$ is  a $n$-dimensional complex vector space and  $\mathfrak g=sl(V)$.   We sometimes assume to have chosen a trivialization $\bigwedge ^nV\simeq \mathbb{C}$, although for a formal step it is better not to think in this form.\smallskip

 We are interested in studying the isotypic component of  type  $S^n(V)$ (resp. $S^n(V^*)$) in $\bigwedge  \mathfrak g^*$, or the $\mathfrak g$-invariants of  $S^n(V^*)\otimes \bigwedge  \mathfrak g^*$,        (resp. $S^n(V )\otimes \bigwedge  \mathfrak g^*$). As we will see in the next Section,  $S^n(V)$ is a small representation, hence we can use   \cite[Corollary 4.2]{R} to deduce  that   
 \begin{equation}\label{reeder}
\dim((S^n(V^*)\otimes \bigwedge  \mathfrak g^*)^{\mathfrak g})=
\dim((S^n(V )\otimes \bigwedge  \mathfrak g^*)^{\mathfrak g})=2^{n-1}.
\end{equation}  

We think of  $\bigwedge^i  \mathfrak g^*$ as the space of multilinear alternating  functions in $i$ variables from $\mathfrak g$ to $\mathbb{C}$ and  of $\bigwedge^i  \mathfrak g^*\otimes  \bigwedge ^nV$ as the space of multilinear alternating  functions in $i$ variables from $\mathfrak g$ to $  \bigwedge ^nV$ (similarly for $  \bigwedge ^nV^*$).  

Recall that the primitive generators of the ring of invariants $( \bigwedge  \mathfrak g^*)^{\mathfrak g }$ are  the functions $T_i$  defined by 
$$T_i:=tr(St_{2i+1}( A_1 , A_2 ,\ldots, A_{2i }, A_{2i+1} )),$$
 where $St_n(x_1,\ldots,x_n)=\sum_{\sigma\in S_n}\epsilon_\sigma x_{\sigma(1)}\ldots x_{\sigma(n)}$ is the standard polynomial.

We  introduce equivariant maps  $$\Phi:S^n(V)\to \bigwedge ^n\mathfrak g^*\otimes  \bigwedge^nV,\quad \Psi:S^n(V)\to \bigwedge ^{n-1}\mathfrak g^*\otimes  \bigwedge ^nV$$ by assigning  homogeneous polynomial maps (cf. \cite[\S5, 2.3]{Procesi}) $$v\mapsto \Phi(v)\in \bigwedge ^n\mathfrak g^*\otimes  \bigwedge ^nV,\ v\mapsto \Psi(v)\in  \bigwedge ^{n-1}\mathfrak g^*\otimes  \bigwedge ^nV$$ 
defined, for $v\in V$,  as 
\begin{align*}\label{}
 &\Phi(v)(A_1,\ldots,A_n):=A_1v\wedge A_2v\wedge \ldots \wedge A_{n-1}v\wedge A_nv,\\  
 &\Psi(v)(A_1,\ldots,A_{n-1}):=A_1v\wedge A_2v\wedge \ldots\wedge  A_{n-1}v\wedge v.
\end{align*}
A similar formula  holds for  maps $\Phi^*:S^n(V^*)\to \bigwedge\limits^n\mathfrak g^*\otimes  \bigwedge\limits^nV^*,\quad \Psi^*:S^n(V^*)\to \bigwedge\limits^{n-1}\mathfrak g^*\otimes  \bigwedge\limits^nV^*$: when $\gamma\in V^*$ we set
\begin{align*}\label{}
 &\Phi^*(\gamma)(A_1,\ldots,A_n):=A_1^t \gamma\wedge A_2^t \gamma\wedge \ldots \wedge A_{n-1}^t\gamma\wedge A_n^t \gamma,\\
 &\Psi^*(\gamma)(A_1  ,\ldots,A_{n-1} ):=A_1^t \gamma\wedge A_2^t \gamma\wedge \ldots \wedge A_{n-1}^t\gamma\wedge \gamma.
\end{align*} We use the same symbols  $\Phi,\Psi$ to denote the corresponding elements in $(S^n(V^*)\otimes \bigwedge  \mathfrak g^*)^{\mathfrak g}$.  Notice that we have an equivariant pairing $ S^n(V )\times S^n(V^*)\to \mathbb{C}$, which gives, by duality, a canonical map $I:\C\to  S^n(V )\otimes S^n(V^*)$, and which induces an equivariant pairing $(\cdot,\cdot)$
\begin{equation*}\label{}
 Hom(S^n(V), \bigwedge  \mathfrak g^*\otimes  \bigwedge ^nV)\times 
 Hom(S^n(V^*), \bigwedge  \mathfrak g^*\otimes  \bigwedge ^nV^*)\to  \bigwedge  \mathfrak g^*\otimes  \bigwedge ^nV\otimes  \bigwedge ^nV^*= \bigwedge  \mathfrak g^*
 \end{equation*}  in the following way. We let $\langle \cdot|\cdot\rangle$ denote the natural pairing between 
 $V$ and $V^*$. We extend this pairing to define the canonical trivialization  $\bigwedge ^nV\otimes  \bigwedge ^nV^*\to \mathbb{C}$ by setting 
 \begin{equation}\label{catri}
 \langle v_1\wedge v_2\wedge\ldots\wedge v_n |\gamma_1\wedge \gamma_2\wedge\ldots\wedge \gamma_n\rangle= \det (\langle v_i\,|\,\gamma_j\rangle).
\end{equation}   The pairing $(a,b)$ is then defined by  computing in $1$ the composition
\begin{align*}\label{ccd0}
&\begin{CD}
\mathbb{C}@>I>>S^n(V)\otimes S^n(V^*)@>a\otimes b>>\bigwedge ^{i }\mathfrak g^*\otimes  \bigwedge ^nV\otimes \bigwedge ^{j}\mathfrak g^* \otimes  \bigwedge ^nV^*
\end{CD}\\
&\begin{CD}@>m>>\bigwedge ^{i +j}\mathfrak g^*\otimes  \bigwedge ^nV \otimes  \bigwedge ^nV^* @> \eta>>\bigwedge ^{i +j}\mathfrak g^*
\end{CD}.\notag
\end{align*}
Here  $m$ is exterior multiplication and the isomorphism $\eta$ is given by the canonical trivialization  \eqref{catri}.

Restricting to invariants we have   finally a pairing 
\begin{equation*}\label{} Hom(S^n(V^*), \bigwedge  \mathfrak g^*\otimes  \bigwedge ^nV^*)^{\mathfrak g}\times 
 Hom(S^n(V ), \bigwedge  \mathfrak g^*\otimes  \bigwedge ^nV )^{\mathfrak g}  \to (\bigwedge  \mathfrak g^*)^{\mathfrak g}.
\end{equation*}  
%\begin{lemma}\label{pir}
%$(\Psi^*\,|\,\Phi)=T_{n-1}$.
%\end{lemma}
We want to compute $(\Psi,\Phi^*)$, so we want to understand the composed map
\begin{equation*}\label{}
 \begin{CD}
\mathbb{C}@>I>>S^n(V)\otimes S^n(V^*)@>\Psi\otimes \Phi^*>>\bigwedge \limits^{n-1 }\mathfrak g^*\otimes \bigwedge\limits^{n}V\otimes \bigwedge \limits^{n}\mathfrak g^* \otimes \bigwedge\limits ^{n }V^*@>\eta>>\bigwedge\limits ^{n-1 }\mathfrak g^* \otimes \bigwedge \limits^{n}\mathfrak g^*.
\end{CD}
\end{equation*} 
For this we can polarize, getting the following commutative diagram
\begin{equation*}\label{}
\begin{CD}
\mathbb{C}@>I>>S^n(V)\otimes S^n(V^*)@>\Psi\otimes \Phi^*>>\bigwedge\limits ^{n-1 }\mathfrak g^*\otimes \bigwedge\limits ^{n }V\otimes \bigwedge\limits ^{n}\mathfrak g^* \otimes \bigwedge \limits^{n }V^*@>\eta>>\bigwedge\limits ^{n-1 }\mathfrak g^*\otimes \bigwedge \limits^{n}\mathfrak g^* \\ @V1VV@VpVV@V1VV@ViVV\\
\mathbb{C}@>I>> V^{\otimes n}\otimes  (V^*)^{\otimes n}@>\psi\otimes \phi^*>>\bigwedge \limits^{n-1 }\mathfrak g^*\otimes \bigwedge\limits ^{n }V\otimes \bigwedge\limits ^{n}\mathfrak g^* \otimes \bigwedge \limits^{n }V^* @>\pi>> (\mathfrak g^{\otimes 2n-1})^*
%\\
%&&&&&&@VAltVV\\ &&&&&&\bigwedge ^{2n-1}\mathfrak g^*
\end{CD}.
\end{equation*} The map $p(v^n\otimes \gamma^n):= v^{\otimes n}\otimes \gamma^{\otimes n}$ is polarization, the map $i$  is the embedding of multilinear functions alternating in two blocks of variables into multilinear functions, the map $\pi$ is the (external) multiplication of multilinear functions composed with the canonical trivialization.
  The polarized maps  $\phi$ and $\psi$ are given by 
\begin{align*}\label{}
&\psi(v_1,\ldots,v_n)(A_1,\ldots,A_{n-1}):=(n!)^{-1}\sum_{\sigma\in S_n}A_1v_{\sigma(1)}\wedge A_2v_{\sigma(2)}\wedge \ldots \wedge A_{n-1}v_{\sigma(n-1)}\wedge  v_{\sigma(n)},
\\
&\phi^*(\gamma_1,\ldots,\gamma_n)(B_1,\ldots,B_{n}):=(n!)^{-1}\sum_{\tau\in S_n}B^t_1\gamma_{\tau(1)}\wedge B^t_2\gamma_{\tau(2)}\wedge \ldots \wedge B^t_{n-1}\gamma_{\tau(n-1)}\wedge   B^t_n\gamma_{\tau(n)},
\end{align*} 
thus
\begin{align*}\label{}&\pi\circ(\psi\otimes\phi^*)(v_1,\ldots,v_n,\gamma_1,\ldots,\gamma_n)(A_1,\ldots,A_{n-1},B_1,\ldots,B_{n})  
\\
\label{finale}&=\langle \psi(v_1,\ldots,v_n)(A_1,\ldots,A_{n-1})\,|\, \phi^*(\gamma_1,\ldots,\gamma_n)(B_1,\ldots,B_{n})\rangle =(n!)^{-2}\sum_{\sigma,\tau\in S_n}\langle A_\sigma v| B_\tau\gamma\rangle\end{align*}
where for shortness we have set 
$ A_\sigma v= A_1v_{\sigma(1)}\wedge A_2v_{\sigma(2)}\wedge \ldots \wedge A_{n-1}v_{\sigma(n-1)}\wedge  v_{\sigma(n)},$ and $ B_\tau\gamma=B^t_1\gamma_{\tau(1)}\wedge B^t_2\gamma_{\tau(2)}\wedge \ldots \wedge B^t_{n-1}\gamma_{\tau(n-1)}\wedge  B^t_n \gamma_{\tau(n)}.
$\par
We have also, setting $A_n=1_V$,
\begin{equation}\label{final}
 \langle A_\sigma v | B_\tau \gamma\rangle
=\sum_{\lambda\in S_n} \epsilon_\lambda\prod_{h=1}^n\langle A_hv_{\sigma(h)}\,|\,B^t_{\lambda(h)}\gamma_{\tau\circ\lambda(h)} \rangle
=\sum_{\lambda\in S_n} \epsilon_\lambda\prod_{h=1}^n\langle B_{\lambda(h)}A_hv_{\sigma(h)}\,|\, \gamma_{\tau\circ\lambda(h)} \rangle.
\end{equation} Consider $ \langle A_\sigma v | B_\tau \gamma\rangle$ as a function on $V^{\otimes n}\otimes  (V^*)^{\otimes n}=End(V)^{\otimes n}$. The image of the canonical element $I$ in $End(V)^{\otimes n}$ is $1_V^{\otimes n}$ and we want to compute $ \langle A_\sigma v | B_\tau \gamma\rangle$ on this canonical element.  

For this define formally  matrix variables  $Y_i=v_i\otimes\gamma_i$. We first compute  $\langle A_\sigma v | B_\tau \gamma\rangle$ on all elements $Y_1\otimes Y_2\otimes\ldots \otimes Y_n\in End(V)^{\otimes n}$; then we set all $Y_i=1_V$ in order to perform the desired computation. 

More in detail, we proceed as follows.
For $X_1,\ldots,X_n\in\g$, set
\begin{equation*}\label{final1} I_{\sigma,\tau}:=
\prod_{h=1}^n \langle X_iv_{\sigma(i)}\,|\, \gamma_{\tau(i)} \rangle =
\prod_{h=1}^n \langle X_{\sigma^{-1}(i)}v_{ i }\,|\, \gamma_{\tau\circ\sigma^{-1}(i)} \rangle. \end{equation*}

In order to explicit this formula set $w_i=X_{\sigma^{-1}(i)}v_i$ and $Z_i^{\sigma}=X_{\sigma^{-1}(i)}\circ Y_{ i }=X_{\sigma^{-1}(i)}v_{ i }\otimes\gamma_{ i }=w_i\otimes\gamma_{ i }$.  We have
\begin{equation*}\label{final2} I_{\sigma,\tau}=
\prod_{i=1}^n  \langle w_i\,|\, \gamma_{\tau\circ \sigma^{-1}(i)} \rangle .
\end{equation*}

\smallskip

Recall that, if we take matrix variables $W_i:=w_i\otimes\gamma_i$ and a permutation $\mu$,  then $\prod_i\langle w_i\,|\,\gamma_{\mu(i)}\rangle$  is the multilinear invariant of $n$ matrices  $\phi_\mu(W_1,\ldots,W_n):=\prod tr(M_j)$, where the monomials $M_j$ are the products of the $W_i$ over the indices $i$ appearing in the cycles of  $\mu$. 
It follows that we have the formula
\begin{equation}\label{conj}
\phi_\mu(W_{\tau(1)},\ldots,W_{\tau(n)})=\phi_{\tau\mu\tau^{-1}}(W_1,\ldots,W_n).\end{equation}
Clearly,\begin{equation}\label{}
I_{\sigma,\tau} =\phi_{\tau\circ\sigma^{-1}}(Z_1^{\sigma},Z_2^{\sigma},\ldots,Z_n^{\sigma})).
\end{equation} When we compute this invariant on the canonical element, this is equivalent to setting all $Y_i=1_V$,  hence $Z_i^{\sigma}=X_{\sigma^{-1}(i)}\circ Y_{ i }$ becomes $ X_{\sigma^{-1}(i)}$ and we get as evaluation $$\phi_{\tau\circ\sigma^{-1}}(X_{\sigma^{-1}(1)},X_{\sigma^{-1}(2)},\ldots,X_{\sigma^{-1}(n)}) =\phi_{\sigma^{-1}\circ \tau}(X_{ 1 },X_{ 2 },\ldots,X_{ n }).$$ 
In the last equality we have used \eqref{conj}.
Setting $X_i=B_{\lambda(i)}A_i$ we find  \begin{equation*}\label{}
 \langle A_\sigma v | B_\tau \gamma\rangle(I)=
\sum_{\lambda\in S_n} \epsilon_\lambda\phi_{\sigma^{-1}\circ\tau\circ\lambda}(B_{\lambda(1)}A_1 ,B_{\lambda(2)}A_2 ,\ldots,B_{\lambda(n-1)}A_{n-1},B_{\lambda(n)} ),
\end{equation*}
so  that
 \begin{equation}\label{supfi}
\pi\circ( \phi\otimes \psi^*)\circ I(1)=(n!)^{-2}\sum_{\sigma,\tau,\lambda}\epsilon_\lambda\phi_{\sigma^{-1}\circ\tau\circ\lambda}(B_{\lambda(1)}A_1 ,B_{\lambda(2)}A_2 ,\ldots,B_{\lambda(n-1)}A_{n-1},B_{\lambda(n)} ).
\end{equation} 
Recall that $(\Psi,\Phi^*)=(m\circ\eta\circ(\Psi\otimes \Phi^*)\circ I)(1)$.
For any vector space $U$ we identify   the space $\bigwedge \limits^kU^*$ with the subspace of $(U^*)^{\otimes k}$ formed by the alternating multilinear functions. Under this embedding, a  decomposable element $\phi_1\wedge\ldots\wedge\phi_k$ corresponds to the function
$$f(x_1,\ldots,x_k):=\sum_{\sigma\in S_k}\epsilon_\sigma \phi_1(x_{\sigma(1)})\ldots  \phi_1(x_{\sigma(k)}).$$
The alternator operator on $(U^*)^{\otimes n}$ is 
$$Alt:y_1\otimes\ldots\otimes y_n\mapsto \frac{1}{n!}
\sum_{\sigma\in S_n}\epsilon_\sigma y_{\sigma(1)}\otimes\ldots\otimes  y_{\sigma(k)} .$$

The relation between exterior multiplication of alternating functions and of multilinear functions  is given by the following commutative diagram
$$
\begin{CD}
\bigwedge\limits^{h}U^*\otimes\bigwedge\limits^k U^*@>m>> \bigwedge \limits^{h+k}U^* \\ @ViVV@V\frac{1}{\binom{h+k}{h}}iVV\\
(U^*)^{\otimes h+k}@>Alt>> (U^*)^{\otimes h+k}
\end{CD}
$$
which in our setting reads
$$
\begin{CD}
\bigwedge\limits^{n-1}\g^*\otimes\bigwedge\limits^n\g^*@>m>> \bigwedge \limits^{2n-1}\mathfrak g^* \\ @ViVV@VC_n1VV\\
(\mathfrak g^*)^{\otimes 2n-1}@>Alt>> \bigwedge \limits^{2n-1}\mathfrak g^*
\end{CD},
$$
where $C_n=\tfrac{(n-1)!n!}{(2n-1)!}$.
Thus 
\begin{equation}\label{Alt}
(\Psi,\Phi^*)=C_n^{-1}Alt\circ\pi\circ (\phi\otimes \psi^*)\circ I(1).
\end{equation}
 We need therefore to apply $Alt$ to the right hand side of \eqref{supfi}.
For shortness set
$$
f(\sigma,\tau,\lambda):=\phi_{\sigma^{-1}\circ\tau\circ\lambda}(B_{\lambda(1)}A_1 ,B_{\lambda(2)}A_2 ,\ldots,B_{\lambda(n-1)}A_{n-1},B_{\lambda(n)} )
$$

Let us apply the procedure of alternation to a term  $f(\sigma,\tau,\lambda)$.  If $\sigma^{-1}\circ\tau\circ\lambda$ is not a full cycle, then $Alt(f(\sigma,\tau,\lambda))=0$. To check this we need only to find an odd permutation in $S_{2n-1}$ that fixes the term $f(\sigma,\tau,\lambda)$. Let $c_1\cdots c_s$ be the cycle decomposition of $\sigma^{-1}\circ\tau\circ\lambda$. We can assume that $c_1=(i_1\cdots i_k)$ is a cycle that does not contain $\lambda(n)$. It follows that, if  $M_2,\ldots,M_s$ are the products of matrices corresponding to  cycles $c_2,\ldots,c_s$,
\begin{align*}f(\sigma,\tau,\lambda)&=tr(B_{\lambda(i_1)}A_{i_1} B_{\lambda(i_2)}A_{i_2}\cdots B_{\lambda(i_k)}A_{i_k})tr(M_2)\cdots tr(M_s)\\
&=tr(A_{i_1} B_{\lambda(i_2)}A_{i_2}\cdots B_{\lambda(i_k)}A_{i_k}B_{\lambda(i_1)})tr(M_2)\cdots tr(M_s)
\end{align*} 
and the last equality gives an odd permutation (a cycle of length $2k$) in $S_{2n-1}$ that fixes  $f(\sigma,\tau,\lambda)$.
If $\sigma^{-1}\circ\tau\circ\lambda$ is a full cycle $(j_1\cdots j_n)$, we can assume that $j_n=n$. Then
 $$\epsilon_\lambda f(\sigma,\tau,\lambda)=\epsilon_\lambda tr(B_{\lambda(j_1)}A_{j_1} B_{\lambda(j_2)}A_{j_2} \cdots B_{\lambda(j_{n-1})}A_{j_{n-1}}B_{\lambda(n)}).
 $$  
 Let $\mu\in S_{2n-1}$ be defined by $\mu(i)=n+1$ for $i=1,\ldots,n-1$ and $\mu(i)=i-n+1$ for $i=n,\ldots,2n-1$. If $\omega\in S_n$ we can consider $\omega$ as an element of $S_{2n-1}$ (fixing $n+1,\ldots ,2n-1$). Let $\nu\in S_n$ be defined by $\nu(i)=j_i$. Then $(\nu\circ \mu^{-1}\circ\lambda\circ\nu\circ\mu)^{-1}$ is the permutation mapping $tr(B_{\lambda(j_1)}A_{j_1} B_{\lambda(j_2)}A_{j_2} \cdots B_{\lambda(j_{n-1})}A_{j_{n-1}}B_{\lambda(n)})$ to 
$tr(B_{ 1 }A_1 B_{ 2 }A_2 \cdots B_{ n-1 }A_{n-1}B_n )$. Since the sign of $(\nu\circ \mu^{-1}\circ\lambda\circ\nu\circ\mu)^{-1}$ is $\epsilon_\lambda$, we see that, if $\sigma^{-1}\circ\tau\circ\lambda$ is a full cycle,
\begin{equation}\label{conclu}
Alt(\epsilon_\lambda f(\sigma,\tau,\lambda))= \frac{1}{(2n-1)!}tr(St_{2n-1}(B_{ 1 },A_1 ,B_{ 2 },A_2 ,\ldots,B_{ n-1 },A_{n-1}, B_n )).
\end{equation}
We are now ready to prove the key result of this section.
 \begin{theorem}\label{main} $(\Psi,\Phi^*)=\frac{(-1)^{\binom{n}{2}}}{n!}T_{n-1}$.
 \end{theorem}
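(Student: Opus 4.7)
The plan is to assemble the ingredients already prepared in the discussion just above the statement. By combining \eqref{Alt} with \eqref{supfi},
$$(\Psi,\Phi^*) = C_n^{-1} (n!)^{-2} \sum_{\sigma,\tau,\lambda \in S_n} Alt(\epsilon_\lambda f(\sigma,\tau,\lambda)),$$
and by the case analysis culminating in \eqref{conclu}, the inner alternation vanishes unless $\sigma^{-1}\circ\tau\circ\lambda$ is a full $n$-cycle, in which case it equals $\frac{1}{(2n-1)!}\,tr(St_{2n-1}(B_1,A_1,B_2,A_2,\ldots,B_{n-1},A_{n-1},B_n))$, the same value for every such triple.

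First I would count the contributing triples. For each fixed $(\sigma,\tau)\in S_n^2$ the map $\lambda\mapsto \sigma^{-1}\circ\tau\circ\lambda$ is a bijection of $S_n$, so exactly $(n-1)!$ values of $\lambda$ produce a full $n$-cycle. The total is $(n!)^2(n-1)!$ contributing triples, each of the same value. Substituting $C_n=\frac{(n-1)!\,n!}{(2n-1)!}$, the numerical prefactors telescope to give
$$(\Psi,\Phi^*) = \frac{1}{n!}\,tr(St_{2n-1}(B_1,A_1,B_2,A_2,\ldots,B_{n-1},A_{n-1},B_n)).$$

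To conclude I would rewrite the right-hand side in terms of $T_{n-1}$, which by definition is $tr(St_{2n-1}(\cdot))$ with arguments in the block order $(A_1,\ldots,A_{n-1},B_1,\ldots,B_n)$. Since the standard polynomial is alternating in its arguments, passing from the interleaved sequence to the block sequence introduces the sign of the corresponding shuffle permutation. A direct inversion count suffices: the only inversions arise from pairs $(B_i,A_j)$ appearing in the interleaved order with $i\le j$, of which there are $\sum_{j=1}^{n-1} j=\binom{n}{2}$, while the pairs $(A_i,A_j)$, $(B_i,B_j)$ and $(A_i,B_j)$ inside the interleaved sequence contribute none. Hence the sign is $(-1)^{\binom{n}{2}}$ and the claimed identity follows.

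The main obstacle is not conceptual -- the substantial alternation and cycle-structure work is already done in the paragraphs preceding the statement -- but one of careful bookkeeping: tracking all normalizations ($C_n$, $(n!)^{-2}$, $(2n-1)!^{-1}$, and the combinatorial factor $(n!)^2(n-1)!$) so that they collapse cleanly to $1/n!$, and correctly pinning down the shuffle sign $(-1)^{\binom{n}{2}}$ produced by the mismatch between the natural interleaved order appearing in \eqref{conclu} and the block order used to define $T_{n-1}$.
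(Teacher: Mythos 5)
Your proof is correct and follows the same route as the paper: combine \eqref{supfi}, \eqref{Alt}, \eqref{conclu}, count the $(n!)^2(n-1)!$ triples $(\sigma,\tau,\lambda)$ for which $\sigma^{-1}\tau\lambda$ is an $n$-cycle, and cancel the normalizations to get $1/n!$. The paper leaves the sign $(-1)^{\binom n2}$ implicit (it stops at the interleaved-argument form of $tr(St_{2n-1})$); your inversion count for the shuffle from $(A_1,\ldots,A_{n-1},B_1,\ldots,B_n)$ to $(B_1,A_1,\ldots,A_{n-1},B_n)$ is the right way to pin that factor down and is a welcome addition.
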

  \begin{proof}Combining \eqref{supfi}, \eqref{Alt}, and \eqref{conclu} we have
  \begin{equation*}
(\Psi,\Phi^*)= \frac{C}{(n!)^3(n-1)!}tr(St_{2n-1}(B_{ 1 },A_1 ,B_{ 2 },A_2 ,\ldots,B_{ n-1 },A_{n-1}, B_n )).
\end{equation*}
where $C$ is the number of  triples  $\sigma,\tau,\lambda$ such that $\sigma^{-1}\circ\tau\circ\lambda$ is a full cycle. There are $ (n!)^2(n-1)!$ such triples. 
\end{proof} 

\begin{theorem}\label{IlT}
$ Hom_\g(S^n(V ), \bigwedge  \mathfrak g^*\otimes  \bigwedge ^nV ) \cong (S^n(V^*)\otimes \bigwedge  \mathfrak g^*)^{\mathfrak g}$ is a free module on the two generators  $\Phi,\Psi$ over $\bigwedge ( T_1,\ldots,T_{n-2})$.
\end{theorem}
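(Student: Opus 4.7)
The overall strategy is to count dimensions and then use Theorem \ref{main}, together with its symmetric counterpart, to establish linear independence of $\Phi,\Psi$ over $\bigwedge(T_1,\ldots,T_{n-2})$ via the pairing with the dual covariants $\Phi^*,\Psi^*$. By \eqref{reeder} we have $\dim(S^n(V^*)\otimes\bigwedge\g^*)^\g = 2^{n-1}$, while $\bigwedge(T_1,\ldots,T_{n-2})$ has dimension $2^{n-2}$; hence a free rank-$2$ module over this subalgebra has total dimension $2^{n-1}$, matching the covariant module. Thus the theorem reduces to showing that any relation $P\Phi+Q\Psi=0$ with $P,Q\in\bigwedge(T_1,\ldots,T_{n-2})$ forces $P=Q=0$.

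A preliminary degree check shows that the diagonal pairings $(\Phi,\Phi^*)$ and $(\Psi,\Psi^*)$ automatically lie in $\bigwedge(T_1,\ldots,T_{n-2})$: they sit in $(\bigwedge\g^*)^\g = \bigwedge(T_1,\ldots,T_{n-1})$ in degrees $2n$ and $2n-2$ respectively, and any potential $T_{n-1}$-contribution of the form $\eta T_{n-1}$ would require $\deg(\eta)\in\{1,-1\}$, which is impossible since all $T_i$ have degree $\geq 3$.

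Next, Theorem \ref{main} provides $(\Psi,\Phi^*)=cT_{n-1}$ with $c=(-1)^{\binom{n}{2}}/n!\neq 0$. An entirely parallel computation---obtained by rerunning the alternation argument of Theorem \ref{main} with the roles of $\Phi,\Psi^*$ exchanged for those of $\Psi,\Phi^*$, using that the definitions are structurally symmetric under the swap $v\leftrightarrow\gamma$ and $A\leftrightarrow A^t$ corresponding to the Chevalley involution of $sl(V)$---yields the dual identity $(\Phi,\Psi^*)=c'T_{n-1}$ with $c'\neq 0$. This symmetric version is the main step that requires verification beyond what has already been proved; concretely, one substitutes $\Phi$ for $\Psi$ and $\Psi^*$ for $\Phi^*$ in \eqref{supfi}--\eqref{conclu}, and tracks that the standard polynomial $St_{2n-1}$ reappears with a nonzero combinatorial prefactor.

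Granted these inputs, the conclusion is formal. Pairing $P\Phi+Q\Psi=0$ with $\Phi^*$ yields $P(\Phi,\Phi^*)+cQT_{n-1}=0$. In the direct-sum decomposition $\bigwedge(T_1,\ldots,T_{n-1})=\bigwedge(T_1,\ldots,T_{n-2})\oplus T_{n-1}\bigwedge(T_1,\ldots,T_{n-2})$, the term $P(\Phi,\Phi^*)$ lies in the first summand and $cQT_{n-1}$ in the second, so both must vanish; in particular $Q=0$. The relation reduces to $P\Phi=0$, and pairing with $\Psi^*$ yields $c'PT_{n-1}=0$, whence $P=0$, completing the proof.
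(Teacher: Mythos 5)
Your strategy is a legitimate alternative to the paper's, but you rely on an extra computational input that the paper deliberately avoids, and that input is asserted rather than proved. The paper's route is: show $\partial\Psi=0$ by evaluating $\Psi$ on a highest weight vector, then invoke Kostant's Laplacian identity $\delta\partial+\partial\delta=\tfrac12\sum\theta(z_i)^2$ to get $\partial\delta\Psi=c\Psi$ with $c\neq 0$; this, together with the derivation properties of $\delta$ over the cocycles $T_i$, collapses the linear-independence check for the pair $\{\Psi,\delta\Psi\}$ to the single statement that $a\wedge\Psi=0$ with $a\in\bigwedge(T_1,\ldots,T_{n-2})$ forces $a=0$, which then falls to Theorem~\ref{main}. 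At the very end the paper identifies $\delta\Psi$ with a nonzero multiple of $\Phi$ via a dimension count. In contrast you pair the putative relation directly against both $\Phi^*$ and $\Psi^*$, which buys you a symmetry and avoids the Koszul/Laplacian machinery entirely, but costs you a second nondegenerate pairing $(\Phi,\Psi^*)=c'T_{n-1}$, $c'\neq 0$, that is \emph{not} a formal corollary of Theorem~\ref{main}.

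That second identity is the gap. Your justification --- rerun the alternation argument with $\Phi,\Psi^*$ in place of $\Psi,\Phi^*$, or invoke the Chevalley involution --- is plausible but not carried out, and it is exactly the kind of sign-and-combinatorics computation that needs to be done explicitly to be trustworthy (the paper's own proof of Theorem~\ref{main} is several pages of careful bookkeeping precisely because such claims are delicate). You do flag this honestly, which is good, but as written the proposal replaces one genuine computation with another genuine computation that is left undone. If you do carry it out, the rest of the argument is sound: the degree argument showing $(\Phi,\Phi^*),(\Psi,\Psi^*)\in\bigwedge(T_1,\ldots,T_{n-2})$ is correct, the module-bilinearity $(Pa,b)=\pm P\wedge(a,b)$ for $P$ invariant holds, and the decomposition $\bigwedge(T_1,\ldots,T_{n-1})=\bigwedge(T_1,\ldots,T_{n-2})\oplus T_{n-1}\wedge\bigwedge(T_1,\ldots,T_{n-2})$ does give $Q=0$ and then $P=0$. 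One small bonus of the paper's approach that yours omits: it explicitly proves $\dim Hom_\g(S^n(V),\bigwedge^n\g)=1$ and hence that $\Phi$ and $\delta\Psi$ coincide up to scalar; your proof would get $\Phi\neq 0$ for free from $(\Phi,\Psi^*)\neq 0$, but says nothing about how $\Phi$ and $\delta\Psi$ relate.
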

\begin{proof}
We first prove that $\Psi$ and $\delta\Psi$ freely generate $(S^n(V^*)\otimes \bigwedge  \mathfrak g^*)^{\mathfrak g}$ over 
$\bigwedge (T_1,\ldots,T_{n-2})$.
Using the formula \eqref{reeder}  it is enough  to prove that the two elements are linearly independent over $\bigwedge (T_1,\ldots,T_{n-2})$. 

Let $\{e_i\}$ be a basis of weight vectors for $V$ with $e_1$ a highest weight vector. Let $\{E_{ij}\}$ be the basis of $End(V)$ of elementary matrices and $\{E^{ij}\}$ the dual basis. Then it is not hard to check that, up to a constant depending on the choice of a trivialization of $\bigwedge^n V$, we have
$$
\Psi(e_1)=E^{21}\wedge\cdots\wedge E^{n1}.
$$
Since $[E_{i1},E_{j1}]=0$ if $i,j\ne 1$, we see that $\partial(\Psi(e_1))=0$. By equivariance, we obtain that $\partial\Psi=0$. Recall (see \cite[(94)]{K}) that the Laplacian $\delta\partial+\partial\delta$ equals $\frac{1}{2}\sum_{i=1}^{\dim\g}\theta(z_i)^2$, where $\{z_i\}$ is an orthonormal basis of $\g$ with respect to the Killing form and $\theta$ is the extension of $ad$ to $\bigwedge \g$.  It follows that $\partial\delta\Psi=(\delta\partial+\partial\delta)\Psi=c\Psi$ with $c$ a non-zero scalar. 
 We can then argue as in the previous section and deduce that is enough to prove that an identity  $a\wedge \Psi= 0,\ a\in \bigwedge( T_1,\ldots,T_{n-2})$, implies $a=0$. For this,  we compute $(a\wedge\Psi,\Phi^*)$  and have, by Theorem \ref{main}, that $0=(a\wedge \Psi,\Phi^*)=\frac{(-1)^{\binom{n}{2}}}{n!}a\wedge T_{n-1}$. Since the relation $a\wedge T_{n-1}=0$ with $a\in \bigwedge ( T_1,\ldots,T_{n-2})$ implies $a=0$, we have proven that  $\Psi$ and $\delta\Psi$ freely generate $(S^n(V^*)\otimes \bigwedge  \mathfrak g^*)^{\mathfrak g}$ over 
$\bigwedge (T_1,\ldots,T_{n-2})$. 
This in particular proves that $\dim Hom_\g(S^n(V),\bigwedge^n\g)=1$, thus $\delta\Psi$ is a multiple of $\Phi$, hence the proof is complete.
\end{proof}
\section{Small representations in type $A$}
For  $sl(n,\C)$,  one can show
that an highest weight module $V$ is small if and only if the highest weight of either $V$ or $V^*$
comes from
a partition of $n$. This means the following: given a partition $\lambda_1\geq\ldots\geq\lambda_n$ of $n$,  the  corresponding highest weight $\lambda$ is $0$ if $\lambda_1=\cdots=\lambda_n=1$ or  $\lambda=\sum_{i=1}^{n-1} a_i\omega_i$ where $\omega_1,\ldots,\omega_{n-1}$ are the fundamental weights and $a_i$ is the number of columns of length $i$ of the partition. For such weights, Stembridge has proved the following formula (cf. \cite[Corollary 6.2]{S}), yielding the graded multiplicities
$M_\lambda(q)$ of the corresponding modules in $\bigwedge sl(n,\C)$. Display the Young diagram in the English way, label the boxes as matrix entries and denote by $h(i,j)$ the hook length of the box $(i,j)$, i.e. the number of boxes strictly on the right of box $(i,j)$ plus the number of boxes strictly below box $(i,j)$ plus one. Set, as usual, $[n]_q=\frac{1-q^n}{1-q}$ and $[n]_q!=\prod_{i=1}^n[i]_q$. Then
\begin{equation}\label{es}
M_\lambda(q)=\frac{[n]_{q^2}!}{1+q}\prod_{(i,j)\in\lambda}\frac{q^{2i-1}+q^{2j-2}}{1-q^{2h(i,j)}}.
\end{equation}
Notice that, since we are dealing with $sl(n,\C)$ rather than $gl(n,\C)$, there is  an extra factor $1/(1+q)$ in the right hand side of \eqref{es} w.r.t. the formula displayed in \cite{S}.
\begin{proposition} If  $\g=sl(V)$ and $V(\lambda)$ is an irreducible non-trivial representation of $\g$ with $\lambda$ corresponding to a partition of $n=\dim V$, then $Hom_\g(V(\lambda),\bigwedge \g)$ is free over $\bigwedge(P_1,\ldots,P_{n-2})$ if and only if
$V(\lambda)$ is either $S^n(V)$ or the adjoint representation.
\end{proposition}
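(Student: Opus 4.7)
My plan is to combine Stembridge's formula \eqref{es} for $M_\lambda(q)$ with a specialization at a well-chosen root of unity. The necessary condition to exploit is that, if $Hom_\g(V(\lambda),\bigwedge \g)$ is free over $\bigwedge(P_1,\ldots,P_{n-2})$, then $M_\lambda(q)$ is divisible in $\mathbb{Z}[q]$ by the Poincar\'e polynomial $\prod_{i=1}^{n-2}(1+q^{2i+1})$ of $\bigwedge(P_1,\dots,P_{n-2})$.

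The ``if'' direction is already in hand: for $\lambda=(n)$ it is Theorem \ref{IlT}, and for $\lambda=(2,1^{n-2})$ (the adjoint representation) it is the main result of \cite{DPP}. For the ``only if'' direction I would focus on the single factor $(1+q^{2n-3})$ and specialize $q$ to a primitive $2(2n-3)$-th root of unity $\zeta$. At such $\zeta$, the factor $(1+q^{2n-3})$ has a simple zero while each $(1+q^{2i+1})$ for $1\le i\le n-3$ is nonzero, because $2i+1<2n-3$ cannot be an odd multiple of $2n-3$. Thus freeness forces $M_\lambda(\zeta)=0$.

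The analysis of Stembridge's formula at $q=\zeta$, for $n\ge 4$, breaks into three pieces. First, the prefactor $[n]_{q^2}!/(1+q)$ is nonzero at $\zeta$: since $\zeta^2$ is a primitive $(2n-3)$-th root of unity and $k\le n<2n-3$ is never a multiple of $2n-3$, no extra zero or pole appears after the obvious cancellations. Second, the hook-length denominators $(1-q^{2h(i,j)})$ do not vanish at $\zeta$, because every hook length in a partition of $n$ is at most $n<2n-3$. Third, a numerator factor $q^{2i-1}+q^{2j-2}=q^{2j-2}(1+q^{2(i-j)+1})$ vanishes at $\zeta$ precisely when $2i-2j+1$ is an odd multiple of $2n-3$; since $|2i-2j+1|\le 2n-1<3(2n-3)$ for any box $(i,j)$ in a partition of $n$, the only possibility is $2i-2j+1=\pm(2n-3)$, which forces $(i,j)=(1,n)$ or $(i,j)=(n-1,1)$.

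Hence $M_\lambda(\zeta)\ne 0$ unless $\lambda$ contains one of these two boxes. The only partition of $n$ containing $(1,n)$ is $\lambda=(n)$; the partitions containing $(n-1,1)$ are $\lambda=(2,1^{n-2})$ and $\lambda=(1^n)$, the latter being the trivial representation, which is excluded. This settles the ``only if'' direction for $n\ge 4$; the case $n=3$ is trivial since the only non-trivial partitions of $3$ are $(3)$ and $(2,1)$. The main obstacle is the careful bookkeeping of vanishing orders in Stembridge's formula, verifying that no unexpected cancellation takes place at $q=\zeta$; once this is confirmed, non-divisibility of $M_\lambda(q)$ by $(1+q^{2n-3})$ is immediate for any other $\lambda$, and hence so is non-freeness.
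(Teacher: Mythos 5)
Your argument is correct and follows essentially the same route as the paper: combine Stembridge's hook-content formula \eqref{es} with the observation that freeness over $\bigwedge(P_1,\dots,P_{n-2})$ forces $M_\lambda(q)$ to be divisible by $\prod_{i=1}^{n-2}(1+q^{2i+1})$, then track which boxes $(i,j)$ could supply the top factor $1+q^{2n-3}$. The one genuine improvement you make is the specialization at a primitive $2(2n-3)$-th root of unity $\zeta$. The paper simply asserts that ``the only possible simplification occurs in the term $\prod_{(i,j)\in\lambda}(q^{2i-1}+q^{2j-2})$,'' which is not justified there (the displayed expression is a ratio, not a visible product of polynomials); your check that the prefactor $[n]_{q^2}!/(1+q)$ and the hook-length denominators $1-q^{2h(i,j)}$ are nonzero and finite at $\zeta$ closes that small gap cleanly, since then $M_\lambda(\zeta)=0$ forces a numerator factor to vanish. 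Your bookkeeping also correctly isolates only the two live boxes $(n-1,1)$ and $(1,n)$; the paper additionally lists $(i,j)=(n,1)$, but the corresponding factor $1+q^{2n-1}$ is not divisible by $1+q^{2n-3}$ for $n\ge4$, so that case is spurious (though harmless, as the paper rules it out as the trivial representation anyway). In short: same strategy, with a more careful and slightly tighter execution of the divisibility step.
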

\begin{proof} The fact that the adjoint representation and $S^n(V)$ have the desired property has been shown in \cite{DPP} and in Section \ref{3} above, respectively.
Assume now that $\lambda$ corresponds to a partition of $n$. We can assume $n\ge 4$: if $n\le 3$ the result is trivially verified. If   $Hom_\g(V(\lambda),\bigwedge \g)$ is free over $\bigwedge(P_1,\ldots,P_{n-2})$, the polynomial affording its graded multiplicities in 
$\bigwedge \g$ has to be divisible by $\prod_{i=1}^{n-2} (1+q^{2i+1})$. Use now formula
\eqref{es}. Look at the highest term $1+q^{2n-3}$ in the graded multiplicities of  $\bigwedge(P_1,\ldots,P_{n-2})$. The only possible simplification occurs in the term  
$\prod_{(i,j)\in\lambda}(q^{2i-1}+q^{2j-2})$ of \eqref{es}. This can happen just in the following three cases:
\begin{enumerate}
\item $i=n,\,j=1$;
\item $i=n-1,\,j=1$;
\item $i=1,\,j=n$.
\end{enumerate}
The first case gives the partition corresponding to the trivial representation, which  is excluded. In the second case, since we are excluding the case where $\lambda_n=1$,  the partition is necessarily $(2,1^{n-2})$, corresponding to the adjoint representation. In the third case the partition is necessarily $(n)$, which corresponds to $S^n(V)$.
\end{proof}


\begin{thebibliography}{100}
\bibitem{B} Y.~Bazlov,  {\em Graded Multiplicities in the Exterior Algebra}, Adv. Math. {\bf 158}, 129--153 (2001)
\bibitem{Broer} A.~Broer, {\em The sum of generalized exponents and Chevalley's restriction theorem for modules of covariants.}
Indag. Math. (N.S.) \textbf{6} (1995), no. 4, 385--396. 
\bibitem{DPP} C~De Concini, P~Papi, C~Procesi,  {\em The adjoint representation inside the exterior algebra of a simple Lie algebra},
ArXiv:1311.4338.
\bibitem{Dolce}S.~Dolce, {\em   Invariant theory of symplectic and orthogonal groups,} ArXiv:1404.2855.
\bibitem{Kostant} B.~Kostant, {\em Lie group representations on polynomial rings.}
Amer. J. Math. \textbf{85} 1963 327--404.
\bibitem{K} B.~Kostant, {\em Clifford algebra analogue of the Hopf-Koszul-Samelson theorem, the
$\rho$-decomposition  $C(\g)=End\,V_\rho\otimes C(P)$, and the $\g$-module structure of $\bigwedge \g$}, Adv. Math.
{\bf 125} (1997), 275--350.
\bibitem{Pan} D.~Panyushev, {\em Invariant theory of little adjoint modules. }
J. Lie Theory \textbf{22}(2012), no. 3, 80--816. 
\bibitem{Procesi} C.~Procesi {\em Lie Groups, An Approach through Invariants and Representations},  Universitext, Springer, New York, 2007.
\bibitem{R} M.~Reeder, {\em Exterior powers of the adjoint representation.}
Canad. J. Math. \textbf{49} (1997), no. 1, 133--159. 
\bibitem{S}ÊJ~ Stembridge, {\em First layer formulas for characters of SL(n, C)}, Trans A.M.S. \textbf{299} (1987), 319--350.

\end{thebibliography}
 \end{document}